\documentclass[10pt]{article}
\font\smallit=cmti10
\font\smalltt=cmtt10

\usepackage{amssymb,latexsym,amsmath,epsfig,amsthm} 
\usepackage[T1]{fontenc}

\makeatletter

\renewcommand\section{\@startsection {section}{1}{\z@}
{-30pt \@plus -1ex \@minus -.2ex}
{2.3ex \@plus.2ex}
{\normalfont\normalsize\bfseries\boldmath}}

\renewcommand\subsection{\@startsection{subsection}{2}{\z@}
{-3.25ex\@plus -1ex \@minus -.2ex}
{1.5ex \@plus .2ex}
{\normalfont\normalsize\bfseries\boldmath}}

\renewcommand{\@seccntformat}[1]{\csname the#1\endcsname. }

\makeatother

\newtheorem{theorem}{Theorem}
\newtheorem{lemma}{Lemma}

\theoremstyle{definition}

\newtheorem*{Ko}{Korselt's Criterion}
\newtheorem{conjtd}{Conjecture}

\newtheorem{ques}{Question}

\begin{document}

\begin{center}
\uppercase{\bf \boldmath $(a,a)$-Carmichael numbers and greatest common divisors of $p-a$}
\vskip 20pt
{\bf Thomas Wright}\\
{\smallit Department of Mathematics, Wofford College, 429 N. Church St., Spartanburg, SC, USA}\\
{\tt wrighttj@wofford.edu}
\end{center}
\vskip 20pt
\centerline{\smallit Received: , Revised: , Accepted: , Published: } 
\vskip 30pt

\centerline{\bf Abstract}
\noindent
Define an $(a,a)${\it-Carmichael number} to be a squarefree natural number $n$ such that $p\mid n$ implies $p-a\mid n-a$.  For such a number $n$ with prime factors $p_1,\cdots,p_m$, define
$$K=GCD[p_1-a,\cdots,p_m-a],$$
and let $C_\nu(X,a)$ denote the number of $(a,a)$-Carmichael numbers up to $X$ such that $K=\nu$.   Assuming a strong conjecture on the first prime in an arithmetic progression, we prove that for any integer $a$ and for any natural number $\nu$ with $(\nu,a)=1$ and $a$ and $\nu$ having opposite parity,
$$C_\nu(X,a)\geq X^{1-(2+o(1))\frac{\log\log\log \log X}{\log\log\log X}}.$$
This is a departure from many traditional constructions of Carmichael numbers, which generally require $K$ to grow along with $n$.
\pagestyle{myheadings}
\markright{\smalltt INTEGERS: 26 (2026)\hfill}
\thispagestyle{empty}
\baselineskip=12.875pt
\vskip 30pt

\section{Introduction}

A {\it Carmichael number} is a composite integer $n$ such that
$$a^n\equiv a\pmod n$$
for every integer $a$.

There is a well-known necessary and sufficient condition for Carmichael numbers, which Korselt discovered in 1899 \cite{Ko}.


\begin{Ko} \textit{A positive composite integer $n$ is a Carmichael number if and only if $n$ is squarefree and $p-1\mid n-1$.}\end{Ko}

While the first Carmichael numbers were discovered over a century ago (see \cite{Ca} and \cite{Si}), a proof that the set of Carmichael numbers is infinite did not appear until 1994 \cite{AGP}.  Following that proof, number theorists raised a number of further questions about Carmichael numbers and related constructs, including the following.

\begin{ques}
For any odd prime $Q$, are there infinitely many Carmichael numbers $n$ for which $Q\mid n$?
\end{ques}

\begin{ques}
For any $a\neq 0$, are there infinitely many numbers $n$ such that, for all primes $p\mid n$, $p-a\mid n-a$?
\end{ques}

\begin{ques}
Is it true that the number of Carmichael numbers up to $X$ is $\gg X^{1-o(1)}$?
\end{ques}

The first question was recently answered in the affirmative by Larsen \cite{LarAP}.  The second was answered in the affirmative by the current author, conditionally in the case of $a\neq \pm 1$ \cite{WrV} and unconditionally in the case of $a=-1$ \cite{WrE}, while the case of $a=1$ is obviously the original \cite{AGP} paper.  The third question was also resolved conditionally by the current author in \cite{WrD}, where it was shown that under the assumption of a strong conjecture on the first prime in an arithmetic progression, 
$$C(X)\geq X^{1-(2+o(1))\frac{\log\log \log \log X}{\log \log\log X}},$$
where $C(X)$ denotes the number of Carmichael numbers up to $X$.

A logical next step would be to combine these questions in some capacity.  A recent paper of Zheng \cite{Zhe} gave the name of $(a,b)${\it -Carmichael numbers} for a number $n$ where $p-a\mid n-b$ for all primes $p\mid n$; we use this nomenclature below.

\begin{ques}
For any prime $Q$ and any $a\neq \pm 1$, are there infinitely many $(a,a)$-Carmichael numbers $n$ for which $Q\mid n$?  Moreover, can one show that the number of $(a,a)$-Carmichael numbers up to $X$ is $\gg X^{1-o(1)}$ as well?
\end{ques}

Unfortunately, this question is quite out of reach at present, even under the assumption of strong conjectures.  However, a potential intermediate step in the resolution of this question comes from the fact that, in \cite{AGP}, the authors required all $p\mid n$ be such that the $p-1$'s share a large common factor $k$.   Importantly, in that paper (and most of the Carmichael papers that have followed it) $k$ must increase as $n$ grows, which means that their method does not allow us to find infinitely many $n$ divisible by a fixed prime $Q$.  In fact, if we define
    \begin{gather*}
K=GCD[p_1-1,\cdots,p_m-1],
\end{gather*}
the results in \cite{AGP}, \cite{WrD}, and \cite{WrE} did not give constructions that would yield infinitely many $n$ for which $K$ was bounded.  

As such, an intermediate step on the way to Question 4 might be the following.  Define $C(X,a)$ to be the number of $(a,a)$-Carmichael numbers up to $X$, and let $C_\nu(X,a)$ denote the number of $(a,a)$-Carmichael numbers up to $X$ for which $K=\nu$.  

\begin{ques}
For any $\nu$ and $a$ of opposite parity, is it true that $C_{\nu}(X,a)\gg X^{1-o(1)}$?
\end{ques}

In his paper, Larsen \cite{LarAP} introduced the idea of using two different $k$'s so as to divorce the size of the GCD from the size of $n$. (The current author also introduced this idea independently in a recent preprint \cite{WrLCM}.)  This allowed him to prove that $C_{\nu}(X,\pm 1)\gg X^{1/168-\epsilon}$ for any even $\nu$. 



Here, we conditionally resolve Question 5 for the general case of $C_{\nu}(X,a)$ by invoking a conjecture of Heath-Brown that has been used many times in the study of Carmichael numbers (see \cite{BP}, \cite{EPT}, \cite{WrD}, and \cite{WrV}).  The full conjecture is as follows.

\begin{conjtd}
For any $A\geq 2$, if $(b,l)=1$ then there exists a prime $p\equiv b\pmod l$ with $$p\ll l\left(\log l\right)^A.$$
\end{conjtd}
It is not expected that this conjecture should hold for $A<2$; indeed, Granville and Pomerance have conjectured that the first prime $p\equiv b\pmod l$ should be $\gg \phi(l) (\log l)^2$ for infinitely many choices of $l$ (see \cite{GPprime}, page 2).  In our result, however, we require only a weakened version of this conjecture.  We state the weakened form of this conjecture in a way that also avoids $\gg$ notation.
\begin{conjtd}
There exists an $A\geq 2$ such that if $l$ is sufficiently large and $(b,l)=1$ then there exists a prime $p\equiv b\pmod l$ with \begin{gather}\label{conjineq}p<l\left(\log  l\right)^A.\end{gather}
\end{conjtd}




In this paper, we prove the following.
\begin{theorem}\label{MainTheorem}
Assume Conjecture 2 holds.  Then for any $\nu$ with $(a,\nu)=1$ and $a$ and $\nu$ having opposite parity,
$$C_\nu(X,a)\geq X^{1-(2+o(1))\frac{\log\log \log \log X}{\log \log\log X}}.$$
\end{theorem}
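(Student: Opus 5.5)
We will follow the strategy of \cite{WrD}, which gives the bound $X^{1-(2+o(1))\frac{\log\log\log\log X}{\log\log\log X}}$ for ordinary Carmichael numbers under Conjecture 2. We combine it with the two-modulus idea of Larsen \cite{LarAP} and \cite{WrLCM}, adapted to the shift $a$ as in \cite{WrV}. The new point is that the primes dividing $n$ are split into two families, so the overall GCD of the $p-a$ can be forced to equal $\nu$ while each family still shares a large common factor.

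\emph{Step 1 (two moduli).} Let $y$ be a parameter with $\log y\approx \log\log X/\log\log\log X$. Take $L=\prod_{q\le y}q^{\lfloor\log y/\log q\rfloor}$, essentially the lcm of $[1,y]$, with the primes dividing $\nu a$ removed and suitable powers of $2$ adjusted. Choose two coprime integers $k_1,k_2$, each a product of many primes not dividing $\nu a$, so that $\gcd(\nu k_1,\nu k_2)=\nu$. For $i=1,2$ we look for primes $p=\nu k_i d+a$ with $d\mid L$. The conditions $(a,\nu)=1$ and the opposite parity of $a$ and $\nu$ are what make $\nu k_i d+a$ coprime to $\nu k_i d$, so such primes can exist. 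They also ensure that the congruences below are solvable, in particular the parity condition at $2$.

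\emph{Step 2 (many primes).} Here we use Conjecture 2, through the counting argument of \cite{WrD}. For the chosen $k_i$, we show that a positive proportion, up to the loss that produces the $(2+o(1))$ exponent, of divisors $d\mid L$ in a suitable range make $\nu k_i d+a$ prime. The conjecture supplies primes in progressions of modulus $l$ below $l(\log l)^A$. In \cite{WrD} this is used to find a prime $k$ whose associated set $\{d: dk+1 \text{ prime}\}$ is large. Here we apply it twice, once for each $k_i$, and we also choose $k_i$ so that the primes $p\equiv a\pmod{\nu k_i}$ are plentiful.

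\emph{Step 3 (combinatorial subset products).} We need $n=\prod p$ with $n\equiv a\pmod{p-a}$ for every $p\mid n$. Since each $p-a$ divides $\nu k_1k_2L$, it suffices to have $n\equiv a\pmod{\nu k_1k_2L}$. The primes all satisfy $p\equiv a\pmod \nu$, but $n\equiv a^m\pmod\nu$, so we need $a^{m}\equiv a\pmod\nu$. We also need $n\equiv a\pmod{k_1}$, which is automatic for primes in family 1 but not for those in family 2. To handle this, we work in the group $G=(\mathbb Z/\nu k_1k_2L\mathbb Z)^*$ and apply the subset-product (Davenport constant) lemma of \cite{AGP}, in the sharpened form used in \cite{WrD}, to the sequence of primes from both families. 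This yields many subsets whose product is $\equiv a$; we insert a fixed pre-chosen prime $\equiv a$ factor to shift the target from the identity to $a$, as in \cite{WrV}. We require each subset to contain at least one prime from each family. Then $K$ divides $\gcd(\nu k_1 d,\nu k_2 d')$, and a small extra condition, namely including primes with coprime $d$'s, forces $K=\nu$ exactly.

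\emph{Step 4 (counting).} The number of such subsets, each giving a product $\le X$, is at least $X^{1-(2+o(1))\frac{\log\log\log\log X}{\log\log\log X}}$. This follows exactly as in \cite{WrD}, since the extra moduli $\nu k_1k_2$ only change the group size by a factor negligible on this scale.

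The main obstacle is Step 2 together with the Davenport-constant bound in Step 3. The group now has modulus $\nu k_1k_2L$ instead of $kL$, so we must check that $k_1,k_2$ can be found small enough, of size $L^{o(1)}$ relative to the exponent loss, for the \cite{WrD} counting to go through unchanged. The first approach to try is taking $k_2$ to be a single prime of size a power of $\log L$, obtained from Conjecture 2.
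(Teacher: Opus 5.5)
There is a genuine gap, and it is in Step~1. With $L$ essentially $\mathrm{lcm}[1,y]$, Steps~3 and~4 cannot work. Theorem~\ref{veb} needs more usable primes than $s(G)$, and $s(G)\ge\lambda(G)\ge\lambda(L)$: take $\lambda(G)-1$ copies of an element of order $\lambda(G)$. This exponent is not small. Conjecture~2 itself gives, for each large prime $r\le y/(\log y)^{A+1}$, a prime $q\equiv 1\pmod r$ with $q<y$. Then $r\mid q-1\mid\lambda(L)$, so $\lambda(L)\ge e^{(1+o(1))y/(\log y)^{A+1}}$. Your candidates $\nu k_i d+a$ with $d\mid L$ cannot be both that numerous and dense. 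A count of the form $X^{1-o(1)}$ needs roughly $u^{1-o(1)}$ usable primes of size about $u$. Every $d\mid L$ is $y$-smooth, so density forces $\log u=y^{o(1)}$. That leaves at most $e^{y^{o(1)}}$ candidates, far below $s(G)$, and Theorem~\ref{veb} then gives nothing. If instead you take enough divisors to exceed $s(G)$, then $y$ is barely larger than $\log u$. The candidates below $u$ then number only $u^{O(\log\log y/\log y)}$, and their products give only about $X^{O(\log\log y/\log y)}$ numbers up to $X$. So ``exactly as in'' the earlier construction is not available: that construction is not an lcm construction. Nor is your parameter $\log y\approx\log\log X/\log\log\log X$ supported by any estimate in your steps.

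The missing idea is the two-level construction, which makes the exponent tiny compared with a dense candidate set. First, Conjecture~2 is applied to the $g\mid J=\prod_{z/2\le r\le z}r$ with $\omega(g)=\lfloor\log z\rfloor$. Pigeonholing gives one $j_0\le(\log z)^{2A}$ and $z^{(1+o(1))\log z}$ primes $q=gj_0+1$, all with $q-1\mid Jj_0$. Splitting these into disjoint $\mathcal Q_1,\mathcal Q_2$ gives $\lambda(L_1L_2)\le Jj_0\le e^{4z/5}$, while $\log L_i=z^{(1+o(1))\log z}$. Second, the primes $d_ik_i\nu+a$ with $d_i\mid L_i$ and $\omega(d_i)=z$ number $z^{z\log z-(2+o(1))z\log\log z}$ and have size $z^{(1+o(1))z\log z}$. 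They are therefore dense and vastly exceed $s<e^{z}$ (Lemma~\ref{sbounds}). The density loss $2\log\log z/\log z$, with $\log z\sim\log\log\log X$, is exactly the exponent in the theorem.

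Your Step~3 also has defects that the paper's layout avoids. One subset-product argument on the merged sequence cannot force both families to appear, or the $d$'s to be coprime. The paper instead builds $n_1$ and $n_2$ separately, each $\equiv 1\pmod{L_1L_2k_1k_2k_3\nu}$. It then gets $K=\nu$ for free from disjoint $L_1,L_2$ and $(k_1,k_2)=1$ (Lemma~\ref{K=nu}). Your shifting prime $P$ must itself satisfy $P-a\mid n-a$. So $P-a=\nu L_1L_2k_1k_2k_3$, and the small $k_3$ must be included in the modulus. Finally, $k_1,k_2$ cannot be fixed in advance as products of many primes. They come from pigeonholing over Conjecture~2 and must stay $\ll\nu^2z^{2A}(\log z)^{4A}$, so that each family can also reach $1$ modulo the other $k_i$ and $k_3$.
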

This is the same lower bound found in \cite{WrD} for the original quantity $C(X,1)$, and it is close to best possible.  Pomerance \cite{Po} proved that
$$C(X,1)\leq X^{1-\frac{\log\log \log X}{2\log\log X}}$$
for sufficiently large $X$, and he subsequently conjectured that
$$C(X,1)\gg X^{1-\frac{\log\log \log X}{\log\log X}}.$$

Thus, while modern construction methods for Carmichael numbers generally require ever-increasing $K$ for ``most" Carmichael numbers, our result would suggest that for any $a\neq 0$, the number of $(a,a)$-Carmichael numbers up to $X$ with bounded $K$ should in fact be relatively close to the size of the set of Carmichael numbers themselves.

\section{Construction Methods}

Thoughout the paper, $p$ will always denote a prime.  Moreover, when we introduce parameters $x$ and $z$, we will assume that these parameters are sufficiently large that all of the statements below are true, since all of the statements hold once the parameters grow sufficiently large.  In fact, for $A$ as defined in Conjecture 2, it will suffice to take $z>e^{e^{100A}}$.  We will also take $z$ to be an integer.

Nearly every modern effort involving Carmichael numbers follows the framework of \cite{AGP}, which depends heavily upon Korselt's criterion; we describe that framework here.  Let $P(y)$ denote the largest prime factor of $y$, and let $\lambda$ denote the Carmichael lambda function.  First, the authors of that paper find a large set of primes $\mathcal Q$ such that for any $q\in \mathcal Q$, $P(q-1)<q^{1-E}$ for some $0<E<1$.  The primes in $\mathcal Q$ are then multiplied together to form
$$L=\prod_{q\in \mathcal Q}q.$$
Because the $q-1$ are smooth relative to $q$, it can be shown that $\lambda(L)$ is small relative to $L$.

Next, the authors define
$$\mathcal P_k=\{p:p=dk+1:d\mid L,\mbox{ }d\leq x^B,\mbox{ and }(L,k)=1\}$$
for a constant $B<1$.  Using results about primes in arithmetic progressions, one can show that there exists a $k_0\geq x^{1-B}$ such that $\mathcal P_{k_0}$ is large if $B<\frac{5}{12}$.  Using a combinatorial theorem of van Emde Boas and Kruyswijk \cite{EK} and Meshulam \cite{Me}, it can then be shown that there are many subsets $\{p_1,\cdots,p_m\}\subset \mathcal P_{k_0}$ such that
$$n=p_1\cdots p_m\equiv 1\pmod L.$$
Clearly, $n$ is also congruent to 1 modulo $k_0$, since $n$ is the product of primes that are 1 mod $k_0$.  So for any $p\mid n$,
$$p-1=dk_0\mid Lk_0\mid n-1.$$
Hence, $n$ is a Carmichael number.

Here, we alter the framework in a way that is somewhat similar to \cite{WrD} and \cite{WrF}.  One of the key ideas in those two papers was to change the way we construct $\mathcal Q$ so as to make $\lambda(L)$ even smaller relative to $L$.  In particular, the method used to construct our primes $p$ can also be used to construct our primes $q$.  Let
$$J=\prod_{\substack{\frac z2\leq r\leq z, \\ r\mbox{ prime
}}}r,$$
and define
$$\mathcal R_j=\{q\mbox{ prime}:q=gj+1,\mbox{ }g\mid J,\mbox{ and }\omega(g)=\lfloor \log z\rfloor\}.$$
Just as before, we can find a $j_0$ for which $\mathcal R_{j_0}$ is relatively large.  Here, the primes $q\in \mathcal R_{j_0}$ are such that $q-1\mid Jj_0$.  Letting $\mathcal Q=\mathcal R_{j_0}$ for some set $\mathcal R_{j_0}$ with many primes, we define $L$ as before and find that $\lambda(L)\mid Jj_0$ as well.  Since this $\lambda(L)$ is very small relative to $L$, we can use much smaller sets of primes $\mathcal P_k$ to find a subset whose product is 1 modulo $L$.

One of the major changes in \cite{LarAP} and \cite{WrLCM} was to create two different (and disjoint) sets $\mathcal Q_1$ and $\mathcal Q_2$.  One can then create an analogous $L_1$ and $L_2$ and prime sets $\mathcal P_{k_1}$ and $\mathcal P_{k_2}$, constructed in such a way that $p_1=d_1k_1\nu+a$ and $(p_1-a,L_2k_2)=1$ for $p_1\in \mathcal P_{k_1}$ and vice-versa for $p_2\in \mathcal P_{k_2}$.  Here, we additionally find a single prime $P$ such that $P=L_1L_2k_1k_2k_3\nu+a$ with $k_3\ll \left(\log\left(L_1L_2k_1k_2\nu\right)\right)^A$.  

Since the $k_i$ are small (as a result of both the construction and the conjecture), it is possible to find sets of primes in $\mathcal P_{k_1}$ that multiply to 1 modulo $k_2k_3L_1L_2$ and sets primes in $\mathcal P_{k_2}$ that multiply to 1 modulo $k_1k_3L_1L_2$.  From the set $\mathcal P_{k_1}$, then, we create a product $n_1$ comprised of primes in this set such that $n_1\equiv 1\pmod{L_1L_2k_1k_2k_3\nu}$; we do the same to find an $n_2$ from $\mathcal P_{k_2}$ such that $n_2\equiv 1\pmod{L_1L_2k_1k_2k_3\nu}$.  Letting $n=Pn_1n_2$, we find that $n$ is an $(a,a)$-Carmichael number with $K=\nu$.

Importantly, we require Conjecture 2 in order to guarantee that $k_1$, $k_2$, and $k_3$ are small.  If, say, $k_2$ were of size $p^{\frac{7}{12}}$ as in \cite{AGP}, or even if $k_2$ were of size $p^\epsilon$ for some small constant $\epsilon$, we would not be able to find enough primes in $\mathcal P_{k_1}$ to guarantee that some subset of them would multiply to 1 modulo $k_2$, and we would not be able to guarantee enough primes in $\mathcal P_{k_2}$ to find $a^{j}\equiv 1\pmod{k_2}$ for some $j\leq \mid \mathcal P_{k_2}\mid $.  One could actually weaken the conjecture somewhat and still prove this result - letting $A=\log\log z$ would still allow the result to be proven - however, we use the requirement that $A$ be a constant to simplify the exposition.

We also note that in most cases below (e.g., lower bounds for $R_{j}$ and $\mathcal P_{k_i}$ and upper bounds for the Carmichael lambda function $\lambda(L)$ and for $L_i$), the bounds here are not close to sharp and can certainly be improved.  However, such improvements would have no effect on the main term of the Main Theorem; indeed, sharpening these bounds to best possible would only affect the $o(1)$-term.  Hence, we content ourselves with the loose bounds below.


\section{Constructing $L_i$}

In \cite{AGP}, the authors find a large set of primes $q$ which will eventually divide $p-1$.  In particular, these $q$'s are chosen such that $q-1$ is fairly smooth; hence, when the authors let $L$ be the product of these $q$'s, they are left with an $L$ for which $\lambda(L)$ is small.  Since we are assuming the conjecture, however, we can find $q$'s for which $q-1$ is very smooth; this will allow us to construct an $L$ for which $\lambda(L)$ is even smaller.  As noted above, this construction was previously used in \cite{WrD} and \cite{WrF}.

First, we construct our $L_i$.  As described above, we let
$$J=\prod_{\substack{\frac z2\leq r\leq z, \\ r\mbox{ prime}}}r,$$
where $z$ is a parameter that is large enough for (\ref{conjineq}) to hold for any $l\geq \frac z2$.

We then consider primes of the form $gj+1$ for $g\mid J$.  Define as before the set
$$\mathcal R_j=\{q\mbox{ prime}:q=gj+1,\mbox{ }g\mid J,\mbox{ and }\omega(g)=\lfloor \log z\rfloor\}.$$
Note that for any prime in $\mathcal R_j$,
\begin{gather}\label{gbound}
g\leq z^{\log z},
\end{gather}
and hence
$$\left(\log  g\right)^A\leq \left(\log z\right)^{2A}.$$
So we can invoke the conjecture to find that
$$\sum_{j=1}^{\left(\log z\right)^{2A}}\mid \mathcal R_j\mid \geq \#\{g\mid J:\omega(g)=\lfloor \log z\rfloor\},$$
since each choice of $g$ must yield at least one $q$ for $j$ in this range.  Since $j<\frac z2$ and any prime divisor of $g$ is $\geq \frac z2$, we know that $(j,g)=1$ for any $g$.  So any prime $q$ can only appear in at most one set $\mathcal R_j$, and hence the $\mathcal R_j$ are pairwise disjoint.

Now, by the standard combinatorial identity that
\begin{gather}\label{combid}
\left(\begin{array}{c} n\\k\end{array}\right)\geq \left(\frac nk\right)^k,
\end{gather}
we know that
\begin{align*}
\#\{g\mid J:\omega(g)=\lfloor \log z\rfloor\}\geq &\left(\begin{array}{c}\lfloor \frac{z}{4\log z} \rfloor \\ \lfloor \log z\rfloor \end{array}\right)>\left(\frac{z}{5\log^2 z}\right)^{\log z-1}\\
\geq &\left(\frac{z}{5\log^2 z}\right)^{\log z}\left(\frac 1z\right)>\left(\frac{z}{15\log^2 z}\right)^{\log z},
\end{align*}
since $3^{\log z}>z$.
So there must exist a $j_0\leq \left(\log z\right)^{2A}$ such that
$$\mid \mathcal R_{j_0}\mid \geq \frac{\left(\frac{z}{15\log^2 z}\right)^{\log z}}{\left(\log z\right)^{2A}}.$$
Choose two disjoint subsets of $R_{j_0}$, each with $\left\lfloor\left(\frac{z}{16\log^2 z}\right)^{\log z}\right\rfloor$ elements.  We will call these subsets $\mathcal Q_1$ and $\mathcal Q_2$.  We then define
$$L_i=\prod_{q\in \mathcal Q_i}q.$$
For future notational ease, we note that
\begin{gather}\label{zeqn}
\left\lfloor\left(\frac{z}{16\log^2 z}\right)^{\log z}\right\rfloor=z^{\log z-(2+o(1))\log\log z}.\end{gather}
\section{The Sizes of $q$, $L_i$ and $\lambda(L_i)$}
Before we construct the sets $\mathcal P_{k_i}$, it will be useful to have information about the sizes of $q$, $L_i$, and $\lambda(L_i)$.  First, we find bounds for $q\in \mathcal Q_i$.
\begin{lemma}\label{qbound}
For any $q\in \mathcal Q_i$,
$$\left(\frac z6\right)^{\log z}\leq q\leq 2z^{\log z}\left(\log z\right)^{2A}.$$
\end{lemma}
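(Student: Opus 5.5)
Every $q\in\mathcal Q_i$ lies in $\mathcal R_{j_0}$, so $q=gj_0+1$ with $g\mid J$, $\omega(g)=\lfloor\log z\rfloor$, and $1\le j_0\le(\log z)^{2A}$. The plan is to bound $g$ above and below using only the fact that each of its prime factors lies in $[\frac z2,z]$. The bounds on $j_0$ then transfer these to $q$.

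For the upper bound, $g$ is a product of $\lfloor\log z\rfloor$ primes each at most $z$. Hence $g\le z^{\lfloor\log z\rfloor}\le z^{\log z}$, which is (\ref{gbound}). Then
$$q=gj_0+1\le z^{\log z}(\log z)^{2A}+1\le 2z^{\log z}(\log z)^{2A},$$
since the first term is at least $1$.

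For the lower bound, each prime factor of $g$ is at least $\frac z2$. So $g\ge (z/2)^{\lfloor\log z\rfloor}\ge (z/2)^{\log z-1}=(z/2)^{\log z}\cdot\frac 2z$. We need this to be at least $(z/6)^{\log z}$, which is equivalent to $3^{\log z}\cdot\frac2z\ge 1$. This holds because $3^{\log z}=z^{\log 3}>z$, the same observation the paper used in the count of the $g$'s. Finally $q\ge g+1>g$ since $j_0\ge1$, which gives $q\ge (z/6)^{\log z}$.

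The only point needing care is the floor in $\omega(g)=\lfloor\log z\rfloor$ in the lower bound. It costs one factor of $z/2$, and this is absorbed by weakening the base from $z/2$ to $z/6$, using $3^{\log z}>z$. Everything else is immediate from the definitions, and $z$ is taken sufficiently large throughout.
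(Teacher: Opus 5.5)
Your proof is correct and follows essentially the same argument as the paper. You bound $g$ above by $z^{\log z}$ and below by $(z/2)^{\log z-1}$, absorb the floor loss via $3^{\log z}>z$, and use $1\le j_0\le(\log z)^{2A}$ to pass to $q$. Your lower bound keeps a factor $2/z$ where the paper uses $1/z$, which is immaterial.
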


\begin{proof}
For the upper bound, we use (\ref{gbound}) to find that
\begin{gather*}q=gj_0+1\leq 2gj_0\leq 2z^{\log z}j_0\leq 2z^{\log z}\left(\log z\right)^{2A}.\end{gather*}
For the lower bound, since $g$ has $\lfloor \log z\rfloor$ prime factors and each of the prime factors is $\geq \frac z2$,
\begin{gather*}q\geq \left(\frac z2\right)^{\log z-1}\geq \left(\frac z2\right)^{\log z}\left(\frac 1z\right)\geq \left(\frac z6\right)^{\log z},\end{gather*}
where again we use the fact that $3^{\log z}>z$.
\end{proof}
We use this to bound $L_i$.
\begin{lemma}\label{Lbound}
For $i=1$ or 2,
$$L_i\leq e^{\left(z^{\log z-(2+o(1))\log\log z}\right)\left(\log^2 z+2A\log\log z\right)}.$$
\end{lemma}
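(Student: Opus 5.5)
The bound follows by multiplying the upper bound of Lemma \ref{qbound} over the elements of $\mathcal Q_i$ and taking logarithms. By construction $L_i=\prod_{q\in\mathcal Q_i}q$ with $|\mathcal Q_i|=\left\lfloor\left(\frac{z}{16\log^2 z}\right)^{\log z}\right\rfloor$, which by (\ref{zeqn}) equals $z^{\log z-(2+o(1))\log\log z}$. Hence
$$L_i\leq \left(\max_{q\in\mathcal Q_i}q\right)^{|\mathcal Q_i|},$$
so it suffices to show
$$\log q\leq \log^2 z+2A\log\log z$$
for every $q\in\mathcal Q_i$, possibly after absorbing lower-order terms into the $o(1)$ in the exponent of $|\mathcal Q_i|$.

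By Lemma \ref{qbound}, every $q\in\mathcal Q_i$ satisfies
$$\log q\leq \log 2+\log z\cdot\log z+2A\log\log z=\log^2 z+2A\log\log z+\log 2.$$
The only point needing care is the stray $\log 2$. I would not absorb it into the exponent of $|\mathcal Q_i|$: doing so multiplies the count by a constant factor, which is harmless but clumsy. Instead I would remove it at the source, by returning to the proof of Lemma \ref{qbound}. There $q=gj_0+1$ with $g$ a product of $\lfloor\log z\rfloor$ primes, each at most $z$, so
$$g\leq z^{\lfloor \log z\rfloor}\leq z^{\log z}.$$
Also $j_0\leq(\log z)^{2A}$ and $gj_0\geq 1$, so
$$q=gj_0+1\leq z^{\log z}(\log z)^{2A}+1.$$
This gives $\log q\leq \log^2 z+2A\log\log z+\epsilon$ for a negligibly small $\epsilon$.

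To eliminate $\epsilon$ cleanly, I would use the slack in $g$. If $z$ is not itself prime then $g\leq (z-1)^{\lfloor\log z\rfloor}$, and then $gj_0+1\leq z^{\log z}(\log z)^{2A}$ holds strictly, since $(z-1)^{\lfloor \log z\rfloor}j_0+1<z^{\log z}j_0$ for large $z$. If $z$ happens to be prime, the gap between $z^{\log z}$ and $z^{\lfloor\log z\rfloor}$ when $\log z$ is not an integer gives the same conclusion. Failing both, I would simply absorb the $\log 2$ into the $o(1)$ term, since a factor $e^{|\mathcal Q_i|\log 2}$ changes the exponent $\log z-(2+o(1))\log\log z$ only by $O(1/\log z)$ after dividing by $\log^2 z$, which is $o(\log\log z)$.

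Combining the two steps gives
$$\log L_i=\sum_{q\in\mathcal Q_i}\log q\leq z^{\log z-(2+o(1))\log\log z}\left(\log^2 z+2A\log\log z\right),$$
which is the claim. The main (minor) obstacle is this bookkeeping of the constant $2$ from Lemma \ref{qbound}. Everything else is direct substitution.
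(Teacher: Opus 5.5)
Your argument is correct and is essentially the paper's: bound every $q\in\mathcal Q_i$ via Lemma~\ref{qbound}, raise to the power $|\mathcal Q_i|=z^{\log z-(2+o(1))\log\log z}$, and absorb the stray constant into the $o(1)$ in the exponent, which is precisely your fallback step. The case analysis on whether $z$ is prime is unnecessary, and in the prime case the claim about the gap between $z^{\log z}$ and $z^{\lfloor\log z\rfloor}$ is asserted rather than proved. You can simply drop that case analysis and absorb the $\log 2$ directly, as the paper does.
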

\begin{proof}
Using the upper bound for $q$ above as well as the size of $\mathcal Q_i$ given in (\ref{zeqn}), we see that
\begin{align*}L_i=&\prod_{q\in \mathcal Q_i}q\leq \left(2z^{\log z}\left(\log z\right)^{2A}\right)^{z^{\log z-(2+o(1))\log\log z}}\\
=&e^{\left(z^{\log z-(2+o(1))\log\log z}\right)\left(\log^2 z+2A\log\log z\right)},
\end{align*}
where the constant 2 at the front of the penultimate expression is absorbed onto the $o(1)$ term.
\end{proof}
Note that this implies
\begin{gather}\label{logL}\log(L_i)\leq z^{\frac 32\log z}.\end{gather}

By contrast, $\lambda(L)$ is quite a bit smaller.
\begin{lemma}\label{lambdaL} For $L_1$ and $L_2$ as above,
$$\lambda(L_1L_2)\leq e^{\frac 45z}.$$
\end{lemma}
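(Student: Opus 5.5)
Since $L_1L_2$ is squarefree, $\lambda(L_1L_2)$ is the least common multiple of the numbers $q-1$ over all $q\in\mathcal Q_1\cup\mathcal Q_2$. Every such $q$ lies in $\mathcal R_{j_0}$, so $q-1=gj_0$ with $g\mid J$. Hence each $q-1$ divides $Jj_0$, and therefore
$$\lambda(L_1L_2)\mid Jj_0,\qquad\text{so}\qquad \lambda(L_1L_2)\leq Jj_0.$$
The whole argument then reduces to bounding $J$ and $j_0$ separately.

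For $j_0$, the construction in the previous section gives $j_0\leq(\log z)^{2A}$. For $J$, we have
$$\log J=\sum_{z/2\leq r\leq z}\log r=\theta(z)-\theta(z/2)+O(\log z).$$
By the prime number theorem, $\theta(z)=z+o(z)$, so $\log J=\frac z2+o(z)$. To avoid relying on asymptotics with unspecified error terms, I would instead cite an explicit Chebyshev-type bound, for instance Rosser--Schoenfeld: $\theta(z)<1.01624z$, together with $\theta(z/2)>0.49(z/2)$ for large $z$. This gives $\log J\leq 0.52z$, say. Alternatively, there is a cruder self-contained bound: $J$ divides $\binom{z}{\lfloor z/2\rfloor}\cdot(\text{small correction})\le 2^z$, which gives $\log J\le z\log 2\approx 0.693z$. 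This is still below $\frac45 z$ and suffices.

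Combining the two bounds,
$$\log\lambda(L_1L_2)\leq 0.7z+2A\log\log z.$$
The standing assumption $z>e^{e^{100A}}$ gives $2A\log\log z<0.1z$, and the claimed bound $\lambda(L_1L_2)\leq e^{\frac45 z}$ follows.

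The only step needing care is the bound on $J$. The binomial-coefficient route is cleanest: every prime $r\in(z/2,z]$ divides $\binom{z}{\lfloor z/2\rfloor}$, and a prime equal to exactly $z/2$ contributes at most an extra factor of $z$. So $J\leq z\cdot 2^z$, and $\log J\leq z\log 2+\log z<0.7z$.
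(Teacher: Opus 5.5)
Your proof is correct and follows the paper's argument: reduce to $\lambda(L_1L_2)\mid Jj_0$, use $j_0\le(\log z)^{2A}$, and bound $J$ explicitly. The only difference is that you estimate $\log J$ via $\theta$ or the central binomial coefficient ($J\le z\cdot 2^z$), whereas the paper bounds each prime by $z$ and counts at most $\frac{3z}{4\log z}$ primes in $[z/2,z]$, giving $J\le e^{\frac34 z}$.

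There are two harmless slips. First, $\theta(z/2)>0.49\,(z/2)$ only yields $\log J\le 0.78z$ for large $z$, not $0.52z$, though this is still enough. Second, for odd $z$ with $(z+1)/2$ prime, that prime does not divide $\binom{z}{\lfloor z/2\rfloor}$; your extra factor of $z$ absorbs this single exception.
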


\begin{proof}
For any prime $q\in \mathcal Q_i$, we know that $q-1\mid Jj_0$.  Since $$\lambda(L_1L_2)\mid LCM\left[q-1:q\in \mathcal Q_1\cup \mathcal Q_2\right],$$it follows that $\lambda(L_1L_2)\mid Jj_0$ as well.  We know that the number of primes between $\frac z2$ and $z$ is bounded loosely by $\frac{3z}{4\log z}$ (see, e.g., \cite{RS}), and hence
$$\lambda(L_1L_2)\leq Jj_0\leq z^{\frac{3z}{4\log z}}\left(\log z\right)^{2A}\leq z^{\frac{4z}{5\log z}}=e^{\frac 45z}.$$
\end{proof}

\section{The Set $\mathcal P_{k_1}$}
Next, we use $\mathcal Q_1$ and $L_1$ to construct one of the two sets of primes that will yield our Carmichael number.  For $k$ with $(k,\nu L_1L_2)=1$, define the set $\mathcal P_k^i$ by
$$\mathcal P_k^i=\{p=d_ik\nu+a:d_i\mid L_i,\omega(d_1)=z\},$$
where again $p$ denotes a (positive) prime.

We must now determine the size of $\mathcal P_{k}^i$ for our first choice of $k$.




\begin{lemma}\label{P1bound}
There exists a $k_1\leq 3\nu z^{A}\left(\log z\right)^{2A}$ such that
$$\mid \mathcal P_{k_1}^1\mid \geq z^{z\log z-(2+o(1))z\log\log z}.$$
\end{lemma}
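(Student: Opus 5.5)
We mirror the argument used for the sets $\mathcal R_j$ in Section 3: apply Conjecture 2 to each admissible $d_1$ separately, then pigeonhole over $k$. Fix $d_1\mid L_1$ with $\omega(d_1)=z$, and consider the progression $m\equiv a\pmod{d_1\nu}$. We would like this progression to be usable: a prime $p=d_1k\nu+a$ with $(k,\nu L_1L_2)=1$. First note $(a,d_1\nu)=1$. Indeed $(a,\nu)=1$ by hypothesis, and each $q\mid d_1$ satisfies $q\geq (z/6)^{\log z}$ by Lemma \ref{qbound}, so $q\nmid a$ once $z$ is large. Hence we may apply Conjecture 2 with $l=d_1\nu$ and $b=a$ (for $a<0$, replace $b$ by its least positive residue). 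This produces a prime $p\equiv a\pmod{d_1\nu}$ with $p<d_1\nu(\log(d_1\nu))^A$, so $p=d_1k\nu+a$ with $k\leq (\log(d_1\nu))^A+O(1)$.

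Next we bound $k$. Since $d_1$ is a product of $z$ primes each at most $2z^{\log z}(\log z)^{2A}$, we get $\log d_1\leq z(\log^2 z+2A\log\log z+1)\leq 2z\log^2 z$. Hence $k\leq (2z\log^2 z+\log\nu)^A\leq 3\nu z^A(\log z)^{2A}$ after crude bounding, matching the range in the statement.

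The main obstacle is the coprimality condition $(k,\nu L_1L_2)=1$, which the definition of $\mathcal P_k^i$ requires. The bound $k\ll z^A(\log z)^{2A}$ is far smaller than every prime $q\in\mathcal Q_1\cup\mathcal Q_2$, so $(k,L_1L_2)=1$ automatically. Coprimality with $\nu$, however, is not automatic. To handle it, we would instead apply the conjecture to a progression modulo $d_1\nu^2$, namely $p\equiv d_1\nu+a\pmod{d_1\nu^2}$. This forces $k\equiv1\pmod \nu$, so $(k,\nu)=1$. The needed coprimality $(d_1\nu+a,d_1\nu^2)=1$ holds because $(a,\nu)=1$ and no $q\mid d_1$ divides $d_1\nu+a$, since $q\nmid a$. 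The bound then becomes $k\leq \nu(\log(d_1\nu^2))^A+1$, which is still $\leq 3\nu z^A(\log z)^{2A}$. The factor $\nu$ in the statement presumably reflects exactly this device. Note also that $p\neq a$ trivially, and distinct $d_1$ giving the same $p$ must give different $k$.

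Finally we count and pigeonhole. The number of $d_1\mid L_1$ with $\omega(d_1)=z$ is $\binom{|\mathcal Q_1|}{z}\geq (|\mathcal Q_1|/z)^z$ by (\ref{combid}). Using (\ref{zeqn}), this is $z^{z\log z-(2+o(1))z\log\log z}\cdot z^{-z}=z^{z\log z-(2+o(1))z\log\log z}$, with the $z^{-z}$ absorbed into the $o(1)$ term. Each $d_1$ yields a pair $(d_1,k)$ with $d_1k\nu+a$ prime and $k$ in the stated range. For fixed $k$, the map $d_1\mapsto p$ is injective, so $\sum_{k\leq 3\nu z^A(\log z)^{2A}}|\mathcal P_k^1|$ is at least the number of such $d_1$. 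Pigeonholing gives some $k_1$ in range with $|\mathcal P^1_{k_1}|\geq z^{z\log z-(2+o(1))z\log\log z}/(3\nu z^A(\log z)^{2A})$. The denominator is $z^{O(1)}$ for fixed $\nu$, so it too is absorbed into the $o(1)$ term, giving the lemma.
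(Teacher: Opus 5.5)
Your proposal is essentially the paper's proof. It uses the same progression $p\equiv d_1\nu+a\pmod{d_1\nu^2}$ to force $k\equiv 1\pmod{\nu}$, the same observation that $k<(z/6)^{\log z}\le q$ gives $(k,L_1L_2)=1$, the same binomial count of the $d_1$, and the same pigeonhole over $k$; you also check $(d_1\nu+a,d_1\nu^2)=1$, which the paper leaves implicit.

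Two small repairs are needed. First, your crude bound $\log d_1\le 2z\log^2 z$ only gives $k$ of size about $2^A\nu z^A(\log z)^{2A}$, which exceeds $3\nu z^A(\log z)^{2A}$ when $A\ge 2$ and $\nu=1$; keep $\log(d_1\nu^2)\le(1+o(1))z\log^2 z$ to get $(\log(d_1\nu^2))^A<2z^A(\log z)^{2A}$, as the paper does. Second, $p\neq a$ is not automatic when $\nu=1$: for $a=2$ the conjecture may return $p=2$, i.e.\ $k=0$. The paper shares this edge case, and you can avoid it by working modulo $2d_1$ with residue $d_1+2$.
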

\begin{proof}
Since we require $p=d_1k\nu+a$ and $(k,\nu)=1$, it is sufficient (though not necessary) to consider the congruence
\begin{gather}\label{modforp1}p\equiv a+d_1\nu\pmod{d_1\nu^2},\end{gather}
since we would then have $$p=d_1\nu(\nu k'+1)+a$$for some $k'$, and hence $k=\nu k'+1$ would be relatively prime to $\nu$.

Note that for any $d_1\mid L_1$, we can bound the modulus in (\ref{modforp1}) with
\begin{gather}\label{d1bound}k'<d_1\nu^2\leq \nu^2\left(2z^{\log z}\left(\log z\right)^{2A}\right)^{z}\leq \nu^2z^{z\log z+2Az\frac{\log\log z}{\log z}+z\frac{\log 2}{\log z}}.\end{gather}
Hence,
\begin{gather}\label{k1bound}\left(\log \left(d_1\nu^2\right)\right)^{A}\leq z^{A}\left(\log z\right)^A\left[\log z+3A\log\log z\right]^A<2 z^{A}\left(\log z\right)^{2A}.
\end{gather}
So we see as before that by the conjecture,
\begin{gather}\label{Pk1}\sum_{k'=1}^{2z^{A}\left(\log z\right)^{2A}}\mid \mathcal P_{\nu k'+1}^1\mid \geq \#\{d_1\mid L_1:\omega(d_1)=z\}.\end{gather}
If $z$ is sufficiently large relative to $|a|\nu$, we have
\begin{gather}\label{qk}
k=\nu k'+1\leq 3\nu z^{A}\left(\log z\right)^{2A}<\left(\frac z6\right)^{\log z}\leq q
\end{gather}
by Lemma \ref{qbound}.  So it follows that $(k,q)=1$ for every $q\mid L_1L_2$.  Thus, each $p$ appearing on the left-hand side of (\ref{Pk1}) appears exactly once.  Note that
$$\#\{d_1\mid L_1:\omega(d_1)=z\}\geq \left(\begin{array}{c}z^{\log z-(2+o(1))\log\log z} \\ z\end{array}\right)\geq z^{z\log z-(2+o(1))z\log\log z}.$$
by (\ref{combid}).  So there must exist a $k_1\leq 3\nu z^{A}\left(\log z\right)^{2A}$ such that
$$\mid \mathcal P_{k_1}^1\mid \geq \frac{z^{z\log z-(2+o(1))z\log\log z}}{3\nu z^{A}\left(\log z\right)^{2A}}=z^{z\log z-(2+o(1))z\log\log z}.$$
\end{proof}
For ease of notation, we will write $\mathcal P_{k_1}$ for $\mathcal P_{k_1}^1$.

\section{The Set $\mathcal P_{k_2}$}

Armed with this definition of $k_1$, we now define another set of primes $\mathcal P_{k_2}^2$, which we will denote $\mathcal P_{k_2}$.  The $k_2$ here will be chosen such that for any $p_1\in \mathcal P_{k_1}$ and $p_2\in \mathcal P_{k_2}$, we will have $(p_1-a,p_2-a)=\nu$ and $\nu\mid (p_i-a,P-a)$.  This is what will allow us to prove that $K=\nu$.

\begin{lemma}\label{P2bound}
There exists a $k_2\leq 7\nu^2 z^{2A}\left(\log z\right)^{4A}$ such that
$$\mid \mathcal P_{k_2}^2\mid \geq z^{z\log z-(2+o(1))z\log\log z}$$
and $(k_1,k_2)=1$.
\end{lemma}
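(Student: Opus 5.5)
We follow the proof of Lemma \ref{P1bound}, but with a modulus that also forces $k_2$ to be coprime to $k_1$. Fix $d_2\mid L_2$ with $\omega(d_2)=z$. Instead of (\ref{modforp1}), we impose the congruence
$$p\equiv a+d_2\nu\pmod{d_2\nu^2k_1}.$$
We must check that the residue is coprime to the modulus. For primes dividing $d_2$, we have $q\nmid a$ because $q$ is huge compared with $|a|$. For primes dividing $\nu$, we use $(a,\nu)=1$. For primes $r\mid k_1$, we need $r\nmid a+d_2\nu$. This is not automatic, so we modify the residue: we take
$$p\equiv a+d_2\nu t\pmod{d_2\nu^2k_1}$$
with $t\equiv 1\pmod \nu$. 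For each prime $r\mid k_1$, we choose $t$ so that $a+d_2\nu t\not\equiv 0$ and also $t\not\equiv 0\pmod r$. This is possible by CRT once $r\geq 3$, since then only two residues are excluded. If $r=2$, then $\nu$ is odd, so $a$ is even by the parity hypothesis, and we need $t$ odd and $a+d_2\nu t$ odd. But $a+d_2\nu t$ is then even, so this case breaks down. The fix is that $k_1=\nu k'+1$ with $\nu$ odd could be even; for $p_1=d_1k_1\nu+a$ to be odd with $a$ even we need $d_1k_1\nu$ odd, so $k_1$ is odd. Thus $2\nmid k_1$ and the problem does not arise.

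With $t$ chosen this way, any prime $p$ in this progression has the form
$$p=d_2\nu(\nu k_1k''+t)+a,$$
so $k=\nu k_1 k''+t$. This $k$ is coprime to $\nu$ (since $t\equiv1$) and coprime to $k_1$ (since $(t,k_1)=1$).

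Next we apply Conjecture 2 to this progression. The modulus is at most $d_2\nu^2k_1$. By (\ref{d1bound}) and Lemma \ref{P1bound}, its logarithm is at most
$$z\log^2 z+3Az\log\log z+O(\log z).$$
Hence the $A$-th power of the logarithm is $<2z^A(\log z)^{2A}$, exactly as in (\ref{k1bound}), since $\log(k_1\nu^2)$ is negligible. So each admissible $d_2$ yields a prime with $k''\leq 2z^A(\log z)^{2A}$. This gives
$$k\leq \nu k_1\cdot 2z^A(\log z)^{2A}+\nu k_1\leq 7\nu^2z^{2A}(\log z)^{4A},$$
using $k_1\leq 3\nu z^A(\log z)^{2A}$.

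As in (\ref{qk}), this $k$ is still smaller than $(z/6)^{\log z}$, so $(k,L_1L_2)=1$ and each prime is counted in a unique $\mathcal P_k^2$. Summing over $k$ in this range, the total count is at least $\#\{d_2\mid L_2:\omega(d_2)=z\}\geq z^{z\log z-(2+o(1))z\log\log z}$ by (\ref{combid}) and (\ref{zeqn}). Pigeonholing over at most $7\nu^2z^{2A}(\log z)^{4A}$ values of $k$ loses only a factor absorbed into the $o(1)$. This gives a $k_2$ with $(k_1,k_2)=1$ and the stated size.

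The main obstacle is the coprimality bookkeeping at primes dividing $k_1$: a single residue class must make $k$ coprime to $k_1$ while keeping the progression's residue coprime to its modulus. The parity hypothesis is what handles $r=2$.
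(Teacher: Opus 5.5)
Your argument is the paper's argument. It uses the same progression modulo $d_2\nu^2k_1$, the same estimate $(\log(d_2\nu^2k_1))^A<2z^A(\log z)^{2A}$ obtained from (\ref{d1bound}), and the same bound $k\le 7\nu^2z^{2A}(\log z)^{4A}<(z/6)^{\log z}$. That bound gives $(k,L_1L_2)=1$, so each prime is counted once, and the same pigeonhole finishes.

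The one difference is your shift $t$. The paper simply takes $t=1$, i.e.\ $p\equiv a+d_2\nu\pmod{d_2\nu^2k_1}$, and never checks that $a+d_2\nu$ is coprime to $k_1$, which Conjecture 2 requires. That check can genuinely fail for every $d_2$ at once. Suppose some prime $r\geq 3$ divides $k_1$, $j_0$ and $a+\nu$. Then every $q\in\mathcal Q_2$ is $\equiv 1\pmod r$, so $a+d_2\nu\equiv a+\nu\equiv 0\pmod r$, and the progression contains no primes at all. Your choice of $t$ fixes this: $t\equiv 1\pmod \nu$, and modulo each prime $r\mid k_1$ you avoid both $0$ and $-a(d_2\nu)^{-1}$. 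So it is a real repair of the paper's proof, not a detour.

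One small slip: the case $r=2$ would not actually break down. With $a$ even and $d_2\nu t$ odd, the residue $a+d_2\nu t$ is odd, so $t$ odd works. In any case, your observation that $k_1$ must be odd makes the point moot, since otherwise $p_1=d_1k_1\nu+a$ would be even.
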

\begin{proof}
Again, we choose a congruence condition that will be sufficient though not necessary:
\begin{gather*}
p\equiv a+d_2\nu \pmod{d_2\nu^2k_1}.
\end{gather*}
In this case, we have
$$p=d_2\nu(\nu k_1k'+1 )+a.$$
Letting $k=\nu k'k_1+1$, we see that $(k,k_1)=1$ and $(k,\nu)=1$.

Taking the log of the bound for $k_1$ in Lemma \ref{P1bound} gives
$$\log k_1\leq 3A\log z.$$
So we can use the bounds in (\ref{d1bound}) and Lemma \ref{P1bound} to find that
\begin{gather}\label{d2bound}d_2\nu^2k_1<\nu^2z^{z\log z+2Az\frac{\log\log z}{\log z}}\left(3\nu z^{A}\left(\log z\right)^{2A}\right)=z^{z\log z+(2A+o(1))z\frac{\log\log z}{\log z}},
\end{gather}
and hence
\begin{gather}\label{k2bound}\left(\log \left(d_2\nu^2k_1\right)\right)^{A}<\left(z\log^2 z+3Az\log \log z\right)^{A}<2z^{A}\left(\log z\right)^{2A}
\end{gather}
when $z$ is sufficiently large.  So as before,
$$\sum_{k'=1}^{2z^{A}\left(\log z\right)^{2A}}\mid \mathcal P_{\nu k'k_1+1}^2\mid \geq \#\{d_2\mid L_2:\omega(d_2)=z\}.$$
From here, the proof is similar to Lemma \ref{P1bound}, beginning with Equation (\ref{Pk1}).  We replace the bound for $k$ in (\ref{qk}) with
\begin{align*}
k=\nu k_1k'+1 & \leq 2\nu z^{A}\left(\log z\right)^{2A}k_1+1\\
&\leq 2\nu z^{A}\left(\log z\right)^{2A}\left(3\nu z^{A}\left(\log z\right)^{2A}\right)+1\\
&\leq 7\nu^2 z^{2A}\left(\log z\right)^{4A}.
\end{align*}
Clearly, this is still less than $\left(\frac z6\right)^{\log z}$, and hence the conclusion after (\ref{qk}) still applies.  Thus, there must exist a $k_2\leq 7\nu^2 z^{2A}\left(\log z\right)^{4A}$ such that $(k_1,k_2)=1$ and
$$\mid \mathcal P_{k_2}^2\mid \geq \frac{z^{z\log z-(2+o(1))z\log\log z}}{7\nu^2 z^{2A}\left(\log z\right)^{4A}}=z^{z\log z-(2+o(1))z\log\log z}.$$

\end{proof}
We now prove the claim that was made at the beginning of this section.

\begin{lemma}\label{K=nu}
Let $p_1\in \mathcal P_{k_1}$ and $p_2\in \mathcal P_{k_2}$.  Then $(p_1-a,p_2-a)=\nu$.
\end{lemma}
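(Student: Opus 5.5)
We write $p_1-a=d_1k_1\nu$ and $p_2-a=d_2k_2\nu$, with $d_1\mid L_1$, $d_2\mid L_2$, and $k_1,k_2$ the integers produced in Lemmas \ref{P1bound} and \ref{P2bound}. The plan is to pull out the common factor $\nu$, which gives
$$(p_1-a,p_2-a)=\nu\,(d_1k_1,\,d_2k_2),$$
and then to show that $(d_1k_1,d_2k_2)=1$. Since $d_1k_1$ and $d_2k_2$ are both products of two factors, it suffices to check four pairwise coprimality statements: $(d_1,d_2)=1$, $(d_1,k_2)=1$, $(k_1,d_2)=1$ and $(k_1,k_2)=1$.

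The four checks go as follows.
\begin{itemize}
\item $(d_1,d_2)=1$. Here $d_1\mid L_1$ and $d_2\mid L_2$, and $L_1,L_2$ are squarefree products over the disjoint prime sets $\mathcal Q_1,\mathcal Q_2$. Hence $(L_1,L_2)=1$, and the claim follows.
\item $(k_1,k_2)=1$. This is stated in Lemma \ref{P2bound}; it holds because $k_2=\nu k'k_1+1\equiv 1\pmod{k_1}$.
\item $(d_1,k_2)=1$ and $(k_1,d_2)=1$. Every prime divisor of $d_i$ is some $q\in\mathcal Q_1\cup\mathcal Q_2$, and Lemma \ref{qbound} gives $q\geq (z/6)^{\log z}$. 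By (\ref{qk}) and the corresponding bound in the proof of Lemma \ref{P2bound}, $1\le k_1,k_2<(z/6)^{\log z}$. So no $q$ can divide $k_1$ or $k_2$, which is exactly the remark after (\ref{qk}) that $(k,q)=1$ for every $q\mid L_1L_2$.
\end{itemize}

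Combining the four facts gives $(d_1k_1,d_2k_2)=1$, and therefore $(p_1-a,p_2-a)=\nu$.

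There is no real obstacle here. The only step that needs care is confirming that the bound $k_2\le 7\nu^2z^{2A}(\log z)^{4A}$ is below $(z/6)^{\log z}$; this holds for $z$ large relative to $\nu$ and $A$, as the paper already assumes.
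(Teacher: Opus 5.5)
Your proof is correct and takes essentially the same route as the paper. Both reduce the claim to four coprimality facts: $(L_1,L_2)=1$ from the disjointness of $\mathcal Q_1$ and $\mathcal Q_2$, $(k_1,k_2)=1$, and $(k_i,L_1L_2)=1$ from the size bound $k_i<(z/6)^{\log z}$. The only cosmetic difference is that the paper shows $(L_1k_1\nu,L_2k_2\nu)=\nu$ and then sandwiches via $\nu\mid p_i-a\mid L_ik_i\nu$, whereas you factor $\nu$ directly out of $(d_1k_1\nu,d_2k_2\nu)$.
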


\begin{proof}
We have shown in Lemmas \ref{P1bound} and \ref{P2bound} that each $k_i$ is coprime to $\nu L_1L_2$ and that $(k_1,k_2)=1$.  Moreover, $(L_1,L_2)=1$, since the two numbers are comprised of nonintersecting sets of prime factors.  So $(L_1k_1\nu,L_2k_2\nu)=\nu$.  Since $\nu\mid p_1-1\mid L_1k_1\nu$ and $\nu\mid p_2-1\mid L_2k_2\nu$, we then have $(p_1-a,p_2-a)=\nu$.  This proves the lemma.
\end{proof}

\section{Bounding $P$}

In order to find a lower bound for $C_\nu(X,a)$, we will also need to define and bound $P$.  Here, we need only find the smallest prime $P$ such that
$$P=a\pmod{L_1L_2k_1k_2\nu}.$$
By the conjecture, there must then exist a $k_3$ for which
$$P=L_1L_2k_1k_2k_3\nu+a$$
with
$$k_3\ll \left(\log\left(L_1L_2k_1k_2\nu\right)\right)^A.$$
We can bound the size of $P$ with the following.
\begin{lemma}  The prime $P$ can be bounded by
$$P \leq e^{2\left(z^{\log z-(2+o(1))\log\log z}\right)\left(\log^2 z+2A\log\log z\right)}.$$
\end{lemma}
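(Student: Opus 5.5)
The plan is to bound the product $M:=L_1L_2k_1k_2\nu$ and then bound $k_3$ via Conjecture 2. Write $P=Mk_3+a\le 2Mk_3$ (for $z$ large relative to $|a|$), and aim for
$$\log P\le \log L_1+\log L_2+\log(k_1k_2\nu)+\log k_3+\log 2 .$$

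First, Lemma \ref{Lbound} gives
$$\log L_1+\log L_2\le 2\left(z^{\log z-(2+o(1))\log\log z}\right)\left(\log^2 z+2A\log\log z\right).$$
This already has exactly the shape of the target exponent. The remaining task is to show that all other terms are of lower order, so that they can be absorbed into the $o(1)$ in the exponent.

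Next, Lemmas \ref{P1bound} and \ref{P2bound} give $k_1k_2\nu\le 21\nu^4 z^{3A}(\log z)^{6A}$. Hence $\log(k_1k_2\nu)=O(A\log z)$, which is negligible against $z^{\log z-(2+o(1))\log\log z}$.

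For $k_3$, the modulus $M$ is certainly larger than $z/2$, so Conjecture 2 applies. Since $(a,M)=1$, the least prime $P\equiv a\pmod M$ satisfies $P<M(\log M)^A$, and therefore $k_3\le (\log M)^A$. Using (\ref{logL}) together with the bound on $k_1k_2\nu$, we get $\log M\le 3z^{\frac32\log z}$. It follows that
$$\log k_3\le A\log\log M\le 2A\log^2 z .$$
This is again negligible.

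The step needing care is the coprimality $(a,M)=1$ required to invoke the conjecture. Here $(a,\nu)=1$ by hypothesis. The primes dividing $L_1L_2$ are at least $(z/6)^{\log z}>|a|$, and they are not divisors of $a$ for $a\ne 0$ once $z$ is large. For $k_1$ and $k_2$, the residue $p_i\equiv a\pmod{k_i}$ with $p_i$ prime and $p_i\nmid a$ forces any common factor of $a$ and $k_i$ to divide $p_i$. That is impossible, since $p_i>k_i$ and $p_i\nmid a$. So $(a,k_i)=1$, and hence $(a,M)=1$.

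Finally, comparing the error terms $O(A\log z)+2A\log^2 z+\log 2$ with the main term, the relative error is at most $z^{-\log z+3\log\log z}$. This is absorbed by slightly adjusting the $o(1)$ in the exponent $z^{\log z-(2+o(1))\log\log z}$, which yields the stated bound.
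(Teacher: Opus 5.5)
Your proof is correct and follows the paper's argument. You bound $L_1L_2$ by Lemma \ref{Lbound} and $k_1,k_2$ by Lemmas \ref{P1bound} and \ref{P2bound}, get $k_3\le(\log M)^A\le z^{2A\log z}$ from Conjecture 2, and absorb everything except the $L_i$ into the $o(1)$ in the exponent; your check that $(a,L_1L_2k_1k_2\nu)=1$ (with $(a,k_i)=1$ forced by $\mathcal P_{k_i}$ being nonempty) makes explicit a hypothesis the paper uses without comment.
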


\begin{proof}
Recall from Lemma \ref{Lbound} that
$$L_i\leq e^{\left(z^{\log z-(2+o(1))\log\log z}\right)\left(\log^2 z+2A\log\log z\right)},$$
and from Lemma \ref{P2bound} that
$$k_i\ll 7\nu^2 z^{2A}\left(\log z\right)^{4A}.$$
So 
$$k_3\ll z^{2A\log z}.$$
Note that the $k_i$ can all be absorbed into the little-O term in the exponent of the $L_i$.  So
$$P \leq e^{2\left(z^{\log z-(2+o(1))\log\log z}\right)\left(\log^2 z+2A\log\log z\right)}.$$
\end{proof}

\section{Constructing a Carmichael Number}
Finally, we construct Carmichael numbers using these sets $\mathcal P_{k_1}$ and $\mathcal P_{k_2}$.  In order to do this, we recall a theorem of van Emde Boas and Kruyswijk \cite{EK} and Meshulam \cite{Me}.  Let $s(L)$ denote the smallest integer such that a sequence of at least $s(L)$ elements in $(\mathbb Z/L\mathbb Z)^\times$ must contain some nonempty sequence whose product is the identity.  Then we have the following.

\begin{theorem}\label{veb}
For any $L$,
$$s(L)<\lambda(L)\left(1+\log\left(\frac{\phi(L)}{\lambda(L)}\right)\right).$$
Moreover, let $v>t>s(L)$.  Then any sequence of $v$ elements in $(\mathbb Z/L\mathbb Z)^\times$ contains at least $\left(\begin{array}{c} v \\t\end{array}\right)/\left(\begin{array}{c} v \\s(L)\end{array}\right)$ distinct subsequences of length at least $t-s(L)$ and at most $t$ whose product is the identity.
\end{theorem}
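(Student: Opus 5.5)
This result is quoted from van Emde Boas--Kruyswijk and Meshulam rather than proved here, so the plan is to reconstruct the standard argument. The first assertion is a bound on the Davenport constant of the finite abelian group $G=(\mathbb Z/L\mathbb Z)^\times$. It has exponent $\lambda(L)$ and order $\phi(L)$, and the plan is a probabilistic/counting argument. Given a sequence $g_1,\dots,g_v$ with no nonempty subproduct equal to $1$, all subproducts over subsets are distinct. I would then show that any sequence of length $v\ge \lambda(L)\left(1+\log(\phi(L)/\lambda(L))\right)$ fails this.

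The route I would take is the group-ring one used by van Emde Boas and Kruyswijk. Work in $\mathbb F_p[G]$ for a prime $p$ (or over $\mathbb C$ with characters), and consider the element
\[
\prod_{i}(1-g_i).
\]
The aim is to show it vanishes once $v$ is large, which forces a vanishing subproduct relation, i.e.\ a nonempty subsequence with product $1$. The key steps are:
\begin{itemize}
\item Decompose $G$ as a product of cyclic groups with exponent $\lambda(L)$.
\item Greedily extract elements. Among $\lambda(L)$ consecutive partial products one repeats, which yields a product-one block. In the averaged version, a random subset argument shows that after using about $\lambda(L)$ elements per stage, the number of distinct attainable products shrinks by a constant factor $e$.
\item Iterating this $\log(\phi(L)/\lambda(L))$ times reduces the problem to a cyclic-like residual of size $\lambda(L)$. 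The final $\lambda(L)$ elements then finish by the pigeonhole argument on partial products.
\end{itemize}
The main obstacle is making the ``shrink by a factor $e$ per $\lambda(L)$ elements'' step rigorous. For this I would use Meshulam's averaging: choose each element with probability $1/2$ and bound the expected collisions via characters, using that each nontrivial character takes values that are $\lambda(L)$-th roots of unity.

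For the second assertion, I would use a double-counting argument. Take any $t$-element subset $T$ of the $v$ positions. Since $t>s(L)$, repeatedly applying the definition of $s(L)$ inside $T$ removes product-one subsequences until fewer than $s(L)$ elements remain. The union $U$ of the removed pieces is then a product-one subsequence with $t-s(L)\le |U|\le t$, and $T\setminus U$ is a set $S$ of size less than $s(L)$; pad it to size exactly $s(L)$ if needed. This gives a map
\[
T\mapsto (U,S), \qquad T=U\cup S.
\]
Hence $T$ is determined by $U$ together with a subset of size at most $s(L)$. So each admissible $U$ arises from at most $\binom{v}{s(L)}$ sets $T$ (up to the padding bookkeeping). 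Since there are $\binom{v}{t}$ choices of $T$, the number of distinct $U$ is at least $\binom{v}{t}/\binom{v}{s(L)}$, as claimed.
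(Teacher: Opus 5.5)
The paper gives no proof of this theorem. It imports it as a black box from van Emde Boas--Kruyswijk and Meshulam, and the counting statement goes back to Alford--Granville--Pomerance.

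\textbf{Second assertion.} Your treatment is the standard double count and is correct, except for the padding. It is unnecessary and breaks the bookkeeping: a padded $S$ no longer tells you which of its elements lie in $T$. Count the fibre directly instead. If $|U|=u$, then $u\ge t-s(L)+1$, and the number of $t$-sets $T\supseteq U$ is
$\binom{v-u}{t-u}\le\binom{v-t+s(L)-1}{s(L)-1}\le\binom{v-1}{s(L)-1}\le\binom{v}{s(L)}$, using $t>s(L)$. This gives the claimed bound.

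\textbf{First assertion: genuine gap.} What you wrote is not a proof, and three of its concrete steps fail.
\begin{enumerate}
\item A product-one-free sequence need not have distinct subset products; only nested subsets are forced to differ. The sequence $(g,g)$ with $g$ of order at least $3$ is product-one free, yet $\{1\}$ and $\{2\}$ give the same product. If your claim held, it would give $s(L)\le\log_2\phi(L)+1$, contradicting $s=n$ for a cyclic group of order $n$.
\item The element $\prod_i(1-g_i)$ does not vanish for long sequences in general. Nilpotency of the augmentation ideal of $\mathbb F_p[G]$ is Olson's argument and works only for $p$-groups. In $\mathbb C[G]$ the element vanishes iff every nontrivial character is trivial on some $g_i$, and this never happens for a constant sequence equal to a generator of a cyclic group, however long. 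Vanishing is sufficient for a product-one subsequence but not necessary, so it cannot be the mechanism.
\item The step ``each block of about $\lambda(L)$ terms shrinks the attainable products by a factor $e$'' carries the whole content of the theorem, and it is neither formulated nor proved. The set of attainable subproducts grows, so it is unclear what shrinks. In its natural reading, for the set of unattained elements, the claim is false. Take $C_n\times C_n$ (e.g.\ $(\mathbb Z/63\mathbb Z)^\times\cong C_6\times C_6$) with basis $g,h$. The sequence $g^{n-1}h^{n-1}$ is product-one free. Any $n$ of its terms, say $g^ah^b$, attain only $(a+1)(b+1)\le(n/2+1)^2$ elements. That leaves at least $\frac34n^2-n-1$ of the $n^2-1$ non-identity elements unattained, a factor near $3/4$ rather than $1/e$.
\end{enumerate}
The random-halving character average you invoke also gives no decay for characters that are trivial on most of the $g_i$. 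To close this gap you must either cite van Emde Boas--Kruyswijk or Meshulam, as the paper does, or actually reproduce one of their arguments.

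\textbf{The strict inequality.} Even a completed version of your plan (every sequence of length at least $\lambda(L)(1+\log(\phi(L)/\lambda(L)))$ has a product-one subsequence) yields only $s(L)\le\lceil\lambda(L)(1+\log(\phi(L)/\lambda(L)))\rceil$, not the strict inequality. In fact the strict inequality as printed is false whenever $(\mathbb Z/L\mathbb Z)^\times$ is cyclic. For $L$ an odd prime, $s(L)=L-1=\lambda(L)$, which equals the right-hand side. This is harmless for Lemma~\ref{sbounds}, which needs only a crude upper bound, but your plan should not claim the strict form.
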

In our case, we have the following bound for $s(L_1L_2k_1k_2k_3\nu)$.
\begin{lemma}\label{sbounds} For $L_i$, $k_i$, and $\nu$ as above,
$$s(L_1L_2k_1k_2k_3\nu)<e^{z}.$$
\end{lemma}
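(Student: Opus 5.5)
We will bound $s(M)$ for $M=L_1L_2k_1k_2k_3\nu$ using the first part of Theorem \ref{veb}, namely
$$s(M)<\lambda(M)\left(1+\log\left(\frac{\phi(M)}{\lambda(M)}\right)\right).$$
The plan is to show that $\lambda(M)$ is at most about $e^{\frac45 z}$ times a small factor, and that the logarithmic term is only polynomial in $z^{\log z}$. The product is then well below $e^{z}$.

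For $\lambda(M)$, we use the general fact $\lambda(xy)\le \lambda(x)\lambda(y)$ (indeed $\lambda(xy)\mid \mathrm{LCM}[\lambda(x),\lambda(y)]$), together with $\lambda(y)\le y$. This gives
$$\lambda(M)\le \lambda(L_1L_2)\,k_1k_2k_3\,\nu .$$
Lemma \ref{lambdaL} gives $\lambda(L_1L_2)\le e^{\frac45 z}$. For the remaining factors:
\begin{itemize}
\item Lemmas \ref{P1bound} and \ref{P2bound} give $k_1,k_2\le 7\nu^2z^{2A}(\log z)^{4A}$.
\item The bound from the section on $P$ gives $k_3\ll z^{2A\log z}$. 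This also follows directly from $k_3\ll(\log(L_1L_2k_1k_2\nu))^A$ together with (\ref{logL}), which yields $k_3\ll z^{\frac32 A\log z}$.
\end{itemize}
Hence, with $\nu$ fixed and $z$ large,
$$k_1k_2k_3\nu\le e^{3A\log^2 z}.$$

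For the logarithmic factor, we use $\phi(M)\le M$ and $\lambda(M)\ge 1$, so that
$$\log\left(\frac{\phi(M)}{\lambda(M)}\right)\le \log M=\log L_1+\log L_2+\log(k_1k_2k_3\nu).$$
By (\ref{logL}), each $\log L_i\le z^{\frac32\log z}$. The other term is $O(A\log^2 z)$. So the factor $1+\log(\phi(M)/\lambda(M))$ is at most $3z^{\frac32\log z}=e^{O(\log^2 z)}$.

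Multiplying the two estimates gives
$$s(M)< e^{\frac45 z+O_A(\log^2 z)}<e^{z},$$
since $\frac{z}{5}$ dominates $C\log^2 z$ once $z>e^{e^{100A}}$.

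The only step needing care is that the factors $k_i$ and $\log L_i$ are super-polynomial in $\log z$ (of size $e^{O(\log^2 z)}$) while still being subexponential in $z$. This requires the explicit $(\log)^A$ shape from Conjecture 2 for $k_3$, not just some bound on it. We check this by using (\ref{logL}) to control $\log(L_1L_2)$ inside the bound for $k_3$. Everything else is routine bookkeeping.
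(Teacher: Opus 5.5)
Your proposal is correct and is essentially the paper's proof: both apply the first part of Theorem \ref{veb}, bound $\lambda(M)\le\lambda(L_1L_2)\,k_1k_2k_3\nu$ using Lemmas \ref{lambdaL}, \ref{P1bound}, \ref{P2bound} and the bound on $k_3$, and control the logarithmic factor by $\log M$ via (\ref{logL}); the only cosmetic difference is that the paper splits the budget as $e^{\frac56 z}\cdot e^{\frac16 z}$, whereas you write it as $e^{\frac45 z+O_A(\log^2 z)}$. One small repair: your justification, the ``general fact'' $\lambda(xy)\le\lambda(x)\lambda(y)$ (and $\lambda(xy)\mid\mathrm{LCM}[\lambda(x),\lambda(y)]$), requires $(x,y)=1$ (e.g.\ $\lambda(4)=2>\lambda(2)^2$), and $k_3$, which can be as large as $z^{\frac32 A\log z}$, need not be coprime to $L_1L_2k_1k_2\nu$; however, the inequality you actually use holds for all $x,y$, because for any unit $g$ the power $g^{\lambda(x)}$ lies in the kernel of the surjection $(\mathbb Z/xy\mathbb Z)^\times\to(\mathbb Z/x\mathbb Z)^\times$, so $\lambda(xy)$ divides $\lambda(x)\,\phi(xy)/\phi(x)$, which is at most $\lambda(x)\,y$.
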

\begin{proof}
First,
$$\lambda(L_1L_2k_1k_2k_3\nu)\leq \lambda(L_1L_2)k_1k_2k_3\nu\leq e^{\frac 45z}\nu\left(21\nu^4z^{3A}\left(\log z\right)^{6A}\right)z^{2A\log z}\leq e^{\frac 56 z}$$
by Lemmas \ref{lambdaL}, \ref{P1bound}, and \ref{P2bound}.  Meanwhile, by (\ref{logL}),
$$\log(L_1L_2k_1k_2k_3\nu)\leq 2\log(L_1L_2)\leq 2z^{3\log z}=2e^{3\log^2 z}<e^{\frac 16 z}$$
when $z$ is large. Thus,
$$s(L_1L_2k_1k_2k_3\nu)<e^{z}.$$
\end{proof}

Now, for $i=1$ or 2, let $F_i(z,X)$ denote the set of square-free integers $n_i\leq X$ such that\\

\par (i) for any $p\mid n_i$, $p\in \mathcal P_{k_i}$, and

\par (ii) $n_i\equiv 1\pmod{L_1L_2k_1k_2k_3\nu}$.\\

Combining Theorem \ref{veb} and Lemma \ref{sbounds} gives the following.
\begin{lemma}\label{ni}
For $i=1$ or 2,
$$\left|F_i\left(z,z^{z^{z+1}\left(\log z+\left(2A+o(1)\right)\frac{\log\log z}{\log z}\right)}\right)\right| \geq z^{z^{z+1}\left(\log z-(2+o(1))\log\log z\right)}.$$


\end{lemma}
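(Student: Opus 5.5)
We apply Theorem \ref{veb} with modulus $L=L_1L_2k_1k_2k_3\nu$ to the sequence of residues of the primes in $\mathcal P_{k_i}$. Every $p\in\mathcal P_{k_i}$ is a unit modulo $L$. Indeed, $p=d_ik_i\nu+a$ is far larger than every prime factor of $L$, and $p\nmid L$. So each $p$ gives an element of $(\mathbb Z/L\mathbb Z)^\times$, and distinct subsequences give distinct squarefree products.

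The choice of parameters is as follows. By Lemma \ref{sbounds}, $s(L)<e^z$. Let $v=|\mathcal P_{k_i}|\geq z^{z\log z-(2+o(1))z\log\log z}$ by Lemmas \ref{P1bound} and \ref{P2bound}, and take $t=z^{z+1}$. This satisfies $v>t>s(L)$, since $z^{z+1}=e^{(z+1)\log z}>e^z$. Theorem \ref{veb} then gives at least $\binom{v}{t}/\binom{v}{s(L)}$ subsequences of length at most $t$ whose product is $1\pmod L$. Each such product $n_i$ satisfies conditions (i) and (ii).

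To bound $n_i$ from above, note that each $p\in\mathcal P_{k_i}$ satisfies $p\leq d_ik_i\nu+|a|$. By (\ref{d1bound}), (\ref{d2bound}) and the bounds on $k_i$, this gives
$$p\leq z^{z\log z+(2A+o(1))z\frac{\log\log z}{\log z}}.$$
A product of at most $t=z^{z+1}$ such primes is therefore at most
$$z^{z^{z+1}\cdot z\left(\log z+(2A+o(1))\frac{\log\log z}{\log z}\right)}.$$
This carries an extra factor of $z$ compared with the exponent in the statement. To match the statement exactly, I would take $t=z^{z}$, so that the exponent becomes $z^{z}\cdot z(\cdots)=z^{z+1}(\cdots)$. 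I will therefore use $t=z^z$, which still exceeds $e^z$.

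For the count, use (\ref{combid}) and the trivial upper bound $\binom{v}{s}\leq v^{s}$:
$$\frac{\binom{v}{t}}{\binom{v}{s(L)}}\geq \frac{(v/t)^t}{v^{e^z}}=v^{t-e^z}\,t^{-t}.$$
Here $\log v=z\log z\,(\log z-(2+o(1))\log\log z)$, with $t=z^z$. Since $e^z=o(t)$, the first factor is
$$v^{t(1-o(1))}=z^{z^{z+1}(\log z-(2+o(1))\log\log z)(1-o(1))}.$$
The factor $t^{-t}=z^{-z^{z+1}}$ is only $z^{-O(z^{z+1})}$. This is absorbed into the $o(1)\log\log z$ term. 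So is the $(1-o(1))$ factor, because $e^z/z^z$ is exponentially small, and hence negligible against the relative error $\log\log z/\log z$.

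The main obstacle is the bookkeeping in this last step. One must check that the losses $t^{-t}$ and $v^{-s(L)}$ really fit inside the $(2+o(1))\log\log z$ error, and that the length choice $t$ matches the exponent $z^{z+1}$ in both the size bound and the count. I expect $t=z^z$ to handle both. If the author's exponent convention requires it, I would fall back to $t=z^{z+1}$ and absorb the discrepancy by slightly adjusting $X$.
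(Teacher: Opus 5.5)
Your proposal is correct and, once you settle on $t=z^z$, it is essentially the paper's proof. The paper likewise applies Theorem \ref{veb} modulo $L_1L_2k_1k_2k_3\nu$ with $t=z^z$, uses $\binom{v}{t}\ge (v/t)^t$ and $\binom{v}{s(L)}\le v^{e^z}$ to absorb the losses into the $o(1)$ term, and bounds $n_i\le p^{t}$ via (\ref{d1bound}) and (\ref{d2bound}); your explicit check that each $p\in\mathcal P_{k_i}$ is a unit modulo $L$, which the paper leaves implicit, is a useful addition.
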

\begin{proof}
We prove this first for $i=2$; the case of $i=1$ can be proven with nearly identical reasoning but slightly better bounds.  

To this end, we recall that
$$\mid \mathcal P_{k_2}\mid \geq z^{z\log z-(2+o(1))z\log\log z}$$
by Lemma \ref{P1bound}.  Clearly, this is much larger than $s(L_1L_2k_1k_2k_3)$.  So define
\begin{align*}t=&z^z,\\ v=&z^{z\log z-(2+o(1))z\log\log z},\end{align*}
where $v$ is the greatest integer below the lower bound for $\mathcal P_{k_2}$ above.

We see that $t<v$.   Note in particular that 
\begin{gather*}\frac v{t}=z^{z\log z-(2+o(1))z\log\log z},\end{gather*}
since $t$ can be absorbed into the little-o term.  So
$$\left(\begin{array}{c} v \\ t\end{array}\right)\geq v^tt^{-t}=z^{t(z\log z-(2+o(1))z\log\log z)},$$
and
$$\left(\begin{array}{c} v\\ s(L) \end{array}\right)\leq v^{s(L)}\leq v^{e^z}=v^{o\left(\frac{t}{\log z\log \log z}\right)}=z^{o\left(\frac{tz}{\log z}\right)}.$$
So by Theorem \ref{veb}, the number of $n_2$ that can be constructed by products of at most $t$ elements and at least $t-s(L)$ elements in $\mathcal P_{k_2}$ is greater than or equal to
\begin{align*}  \left(\begin{array}{c} v \\ t\end{array}\right)/\left(\begin{array}{c} v\\ s(L) \end{array}\right)&\geq v^{t}t^{-t}v^{-e^z}\\
&=z^{t(z\log z-(2+o(1))z\log\log z)}\\
&=z^{z^{z+1}\left(\log z-(2+o(1))\log\log z\right)}.
\end{align*}
By (\ref{d2bound}) and (\ref{k2bound}), for any $p\in \mathcal P_{k_2}$,
$$p\leq z^{z\log z+\left(2A+o(1)\right)\frac{z\log\log z}{\log z}}.$$
Since any $n_2$ will have at most $t=z^z$ prime factors,
\begin{align*}
n_2\leq &\left(z^{z\log z+\left(2A+o(1)\right)\frac{z\log\log z}{\log z}}\right)^{z^{z}}\\
=&z^{z^{z+1}\left(\log z+\left(2A+o(1)\right)\frac{\log\log z}{\log z}\right)}.
\end{align*}
So
$$\left|F_2\left(z,z^{z^{z+1}\left(\log z+\left(2A+o(1)\right)\frac{\log\log z}{\log z}\right)}\right)\right| \geq z^{z^{z+1}\left(\log z-(2+o(1))\log\log z\right)}.$$
For the case of $i=1$, the proof is the same except that instead of Lemma \ref{P2bound} and Equations (\ref{d2bound}) and (\ref{k2bound}), we apply Lemma \ref{P1bound} and Equations (\ref{d1bound}) and (\ref{k1bound}).
\end{proof}
Finally, let

\begin{gather}\label{Xdef} 
X=X(z)=z^{2z^{z+1}\left(\log z+2A\frac{\log\log z}{\log z}\right)}e^{2z^{\log z}\left(\log^2 z+2A\log\log z\right)}.\end{gather}
The function from $z$ to $X(z)$ is well-defined, since it is monotone increasing on our range of $X$.

We give the following as a helpful lookup table comparing logs of $X$ to logs of $z$.
\begin{align*}
\log X=&2z^{z+1}\left(\log^2 z+\left(2A+o(1)\right)\log\log z\right),\\
\log\log X=&z\log z+O(\log z),\\
\log\log \log X=&(1+o(1))\log z,\\
\log\log \log \log X=&(1+o(1))\log\log z.
\end{align*}
This also means that
\begin{align*}
P&\leq e^{2z^{\log z}\left(\log^2 z+2A\log\log z\right)}\\
&\ll e^{\left(\log\log X\right)^{2\log\log\log X}}\\
&\ll X^{\frac{\left(\log\log X\right)^{2\log\log\log X}}{\log X}}\\
&\ll X^{\frac{1}{\left(\log X\right)^{1-\epsilon}}}
\end{align*}
for any $\epsilon>0$.

We can use Lemma \ref{ni} to prove Theorem \ref{MainTheorem}.  We will first prove Theorem \ref{MainTheorem} in the case where $X$ can be written in the specific form above.  
\begin{theorem}
Assume Conjecture 2 holds, and let $X$ be as defined as in (\ref{Xdef}) for a sufficiently large integer $z$.  Then for any $\nu$ with $(a,\nu)=1$ and $a$ and $\nu$ having opposite parity,
$$C_\nu(X,a)\geq X^{1-(2+o(1))\frac{\log\log \log \log X}{\log \log\log X}}.$$
\end{theorem}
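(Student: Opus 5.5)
The plan is to build the numbers explicitly as $n=Pn_1n_2$, with $n_1\in F_1(z,Y)$ and $n_2\in F_2(z,Y)$, where
$$Y=z^{z^{z+1}\left(\log z+\left(2A+o(1)\right)\frac{\log\log z}{\log z}\right)}$$
is the argument appearing in Lemma \ref{ni}. The argument then has three parts: show that every such $n$ is an $(a,a)$-Carmichael number with $K=\nu$, show that distinct pairs $(n_1,n_2)$ give distinct $n\le X$, and count.

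\emph{Korselt-type condition.} Write $M=L_1L_2k_1k_2k_3\nu$. By condition (ii) in the definition of $F_i$, $n_1n_2\equiv 1\pmod M$. Since $P=M+a$, we get $n=Pn_1n_2\equiv a\pmod M$, so $M\mid n-a$. Every prime $p\mid n$ satisfies $p-a\mid M$:
\begin{itemize}
\item for $p=P$, $P-a=M$;
\item for $p\in\mathcal P_{k_i}$, $p-a=d_ik_i\nu$ with $d_i\mid L_i$.
\end{itemize}
Hence $p-a\mid n-a$. For squarefreeness we need the three factors to share no primes.
\begin{itemize}
\item $\mathcal P_{k_1}$ and $\mathcal P_{k_2}$ are disjoint: a common prime $p$ would have $p-a$ dividing both $L_1k_1\nu$ and $L_2k_2\nu$, hence dividing $\nu$ by the coprimality used in Lemma \ref{K=nu}. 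This is impossible, since $\omega(d_i)=z\ge1$.
\item $P$ exceeds every $p\in\mathcal P_{k_i}$ (for instance $L_1$ alone is larger than $d_1k_1\nu$), so $P$ divides neither $n_1$ nor $n_2$.
\end{itemize}
Each $n_i$ is squarefree by definition. We must also make sure $n_1,n_2>1$. The subsequences from Theorem \ref{veb} are nonempty, of length at least $t-s(L)>0$, so this holds.

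\emph{GCD condition.} Every $p-a$ is divisible by $\nu$, so $\nu\mid K$. Since $n_1,n_2>1$, we can pick $p_1\mid n_1$ and $p_2\mid n_2$. Lemma \ref{K=nu} gives $(p_1-a,p_2-a)=\nu$, so $K\mid\nu$, and therefore $K=\nu$. The hypotheses $(a,\nu)=1$ and opposite parity are what allow the primes in Lemmas \ref{P1bound}--\ref{P2bound} and the prime $P$ to exist. I would remark on this briefly, since the residue $a+d\nu$ must be coprime to the modulus.

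\emph{Counting.} Fix $P$. By unique factorization, $n$ determines $n_1$ and $n_2$: they are the parts of $n/P$ supported on the disjoint sets $\mathcal P_{k_1}$ and $\mathcal P_{k_2}$. So the map $(n_1,n_2)\mapsto n$ is injective. The size bound is
$$n\le P\,Y^2\le e^{2z^{\log z}(\log^2z+2A\log\log z)}\,Y^2 .$$
This should be at most $X$ as defined in (\ref{Xdef}). Here lies the main obstacle: the $o(1)$ in the exponent of $Y$ must be absorbed. I would handle it by redoing the bound on $n_i$ with the explicit estimates (\ref{d1bound})/(\ref{d2bound}), giving exponent $2A\log\log z/\log z$ plus a term like $(\log 7\nu^2)/\log z$. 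If this does not fit, the fallback is to note that a $(1+o(1))$ change in $\log X$ costs only an $X^{o(\log\log\log\log X/\log\log\log X)}$ factor. Given the size bound, Lemma \ref{ni} yields
$$C_\nu(X,a)\ge z^{2z^{z+1}(\log z-(2+o(1))\log\log z)}.$$

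Finally, compare with $\log X=2z^{z+1}(\log^2z+O(\log\log z))$, using the lookup table. We get
$$\frac{\log C_\nu}{\log X}=1-\frac{(2+o(1))\log\log z}{\log z}=1-(2+o(1))\frac{\log\log\log\log X}{\log\log\log X},$$
since $\log z=(1+o(1))\log\log\log X$ and $\log\log z=(1+o(1))\log\log\log\log X$.
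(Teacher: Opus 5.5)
\textbf{Overall.} Your argument follows the paper's proof step for step. It uses $n=Pn_1n_2$, the congruence $n\equiv a\pmod M$ with $p-a\mid M$ for every $p\mid n$, Lemma \ref{K=nu} for $K=\nu$, Lemma \ref{ni} for the count, and the log lookup table for the exponent. The extra checks you supply are correct, and the paper leaves all of them implicit: disjointness of $\mathcal P_{k_1}$ and $\mathcal P_{k_2}$, $P>p$, $n_1,n_2>1$, injectivity of $(n_1,n_2)\mapsto n$, and $\nu\mid K$.

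\textbf{The size bound is not closed.} This is the step you flagged, and the paper does not close it either. It simply asserts $n_i\le\sqrt{X'}$ with $X'=z^{2z^{z+1}(\log z+2A\frac{\log\log z}{\log z})}$, silently dropping the $o(1)$ from Lemma \ref{ni}.

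Your first fix cannot work for every integer $z$:
\begin{itemize}
\item Each of the up to $z^z$ primes in $n_i$ carries the factor $k_i\nu$, which you can only bound by about $z^{2A}$. It also carries a factor $2^z$ inside $d_i$ if you use $q\le 2z^{\log z}(\log z)^{2A}$.
\item The only compensation is $\lfloor\log z\rfloor<\log z$, and that is worthless when $z$ lies just above a power of $e$.
\item So your bound for $n_i$ exceeds $\sqrt{X'}$ by a factor of order $z^{Az^z}$. The available slack $X/(PX')$ is only $e^{O(z^{\log z}\log^2 z)}$.
\end{itemize}

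The fallback does not produce integers below $X$ either. $C_\nu(\cdot,a)$ is nondecreasing, so a lower bound at some $X^{1+o(1)}>X$ says nothing about the specific $X$ of (\ref{Xdef}).

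\textbf{A repair.} Apply Theorem \ref{veb} with $t=\lfloor z^z(1-1/\log z)\rfloor$ in place of $z^z$.
\begin{itemize}
\item This lowers $\log_z\bigl(p_{\max}^{\,t}\bigr)$ by about $z^{z+1}$. That beats the excess $z^{z+1}\frac{\log 2}{\log z}+O(Az^z)$, so $n_i\le\sqrt{X'}$ and hence $Pn_1n_2\le X$.
\item The count becomes $z^{t(z\log z-(2+o(1))z\log\log z)}$. Its exponent drops by only $O(z^{z+1})=o(z^{z+1}\log\log z)$, which is absorbed into the $o(1)$.
\end{itemize}
The rest of your argument then goes through unchanged.
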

\begin{proof}

From Lemma \ref{ni}, we can construct many $n_1$ and $n_2$ that are 1 modulo $L_1L_2k_1k_2\nu$.  So let $n=Pn_1n_2$.  By construction, we see that $$n\equiv a\pmod{L_1L_2k_1k_2k_3}.$$
Clearly, if $p\mid n$ then either $p\mid n_1$, in which case $p-a\mid \nu L_1k_1$, or $p\mid n_2$, in which case $p-a\mid \nu L_2k_2$, or else $p=P$ and hence $P-a=\nu L_1L_2k_1k_2k_3$.  In any of these cases, $p-a\mid \nu L_1L_2k_1k_2k_3\mid n-a$.  So $n$ is an $(a,a)$-Carmichael number.  Moreover, by Lemma \ref{K=nu}, we know that $K=\nu$ for this choice of $n$.

To find the number of such $n\leq X$, let
$$X'=z^{2z^{z+1}\left(\log z+2A\frac{\log\log z}{\log z}\right)}\leq \frac XP.$$
We recall that there are at least
$$z^{z^{z+1}\left(\log z-(2+o(1))\log\log z\right)}$$
choices for $n_1$ with $n_1\leq \sqrt{X'}$, and the same lower bound holds for the number of choices of $n_2$ with $n_2\leq \sqrt{X'}$.  So the number of $n=Pn_1n_2$ with $n\leq X$ is at least
$$z^{2z^{z+1}\left(\log z-(2+o(1))\log\log z\right)}.$$
This number can be rewritten as
\begin{align*}
z^{2z^{z+1}\left(\log z-(2+o(1))\log\log z\right)}=&z^{2z^{z+1}\left(\log z+2A\frac{\log \log z}{\log z}-(2+o(1))\log\log z\right)}\\
=&z^{-2z^{z+1}\left(2+o(1)\log\log z\right)}X^{1-O\left(\frac{1}{(\log X)^{1-\epsilon}}\right)}\\
=& \left(X^{-(2+o(1))\frac{\log\log z}{\log z+2A\frac{\log\log z}{\log z}}}X^{1-O\left(\frac{1}{(\log X)^{1-\epsilon}}\right)}\right)\\
=& \left(X^{-(2+o(1))\frac{\log\log z}{\log z}}X^{1-O\left(\frac{1}{(\log X)^{1-\epsilon}}\right)}\right).\\
\end{align*}
Recalling that $\log z=(1+o(1))\log\log\log X$ and $\log\log z=(1+o(1))\log\log\log\log X$, we can write the above as
$$=X^{1-(2+o(1))\frac{\log\log \log \log X}{\log \log\log X}}.$$
This proves the theorem.
\end{proof}

We note that this does not give full generality to choose any sufficiently large $X$, since it is required that $z$ be an integer.  We close this loophole now.
\begin{theorem}\label{ThmX0}
Assume Conjecture 2 holds, and let $X_0$ be sufficiently large.  Assume that $X_0$ can be written in the form of (\ref{Xdef}) but for a non-integer $z$.  Then for any $\nu$ with $(a,\nu)=1$ and $a$ and $\nu$ having opposite parity,
$$C_\nu(X_0,a)\geq X_0^{1-(2+o(1))\frac{\log\log \log \log X_0}{\log \log\log X_0}}.$$
\end{theorem}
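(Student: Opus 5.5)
Since the map $z\mapsto X(z)$ in (\ref{Xdef}) is continuous and monotone increasing, for $X_0$ sufficiently large there is a unique real $z_0$ with $X(z_0)=X_0$. We set $z=\lfloor z_0\rfloor$, which is a sufficiently large integer, and put $X=X(z)\le X_0$. Since $C_\nu(\cdot,a)$ is nondecreasing in its first argument, the previous theorem gives
$$C_\nu(X_0,a)\ge C_\nu(X,a)\ge X^{1-(2+o(1))\frac{\log\log\log\log X}{\log\log\log X}}.$$
It remains to show that replacing $X$ by $X_0$ on the right costs only a factor absorbed into the $o(1)$ term.

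The comparison goes through the lookup table. We have $\log X=2z^{z+1}\left(\log^2 z+O(\log\log z)\right)$ and $\log X_0\le 2(z+1)^{z+2}\left(\log^2(z+1)+O(\log\log z)\right)$. Since $(z+1)^{z+2}\le e\,(z+1)z^{z+1}$, we get
$$\log X_0\le (e+o(1))(z+1)\log X,\qquad\text{so}\qquad \log X\ge \frac{\log X_0}{(e+o(1))(z+1)}.$$
Also $\log\log X_0=\log\log X+O(\log z)$, so $\log\log\log X=(1+o(1))\log\log\log X_0$ and likewise for the fourth iterated logarithm. Therefore the exponent factor $(2+o(1))\frac{\log\log\log\log X}{\log\log\log X}$ may be rewritten in terms of $X_0$ with only a change in the $o(1)$.

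The main obstacle is the loss in the leading power: we have
$$X^{1-\delta}=X_0^{(1-\delta)\log X/\log X_0},$$
and the ratio $\log X/\log X_0$ may be as small as about $1/(ez)$. That is not $1-o(1)$, so the naive argument fails. The fix is to avoid jumping from $z$ to $z+1$ in one step and instead interpolate inside the construction at scale $z$. In Lemma~\ref{ni} we replace $t=z^z$ by a real parameter $t\in[z^z,z^{z+2}]$, chosen so that the bound on $n_i$ equals $\sqrt{X_0/P}$. This is allowed because $|\mathcal P_{k_i}|\ge z^{z\log z-(2+o(1))z\log\log z}$ far exceeds $z^{z+2}$, and $s(L)<e^z=o(t)$ still holds. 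Rerunning the counting argument with this $t$, Theorem~\ref{veb} then yields $z^{2tz(\log z-(2+o(1))\log\log z)}$ values of $n=Pn_1n_2\le X_0$, while $\log X_0=2tz\left(\log z+O(\log\log z/\log z)\right)+O(\log P)$. The exponent ratio is then exactly as in the previous proof, giving
$$C_\nu(X_0,a)\ge X_0^{1-(2+o(1))\frac{\log\log\log\log X_0}{\log\log\log X_0}}.$$
I expect this interpolation of $t$, rather than the monotonicity argument, to carry the proof. The points to check are that $\binom{v}{t}\ge (v/t)^t$ still dominates $\binom{v}{s(L)}$, and that $\log z$ and $\log\log z$ remain $(1+o(1))\log\log\log X_0$ and $(1+o(1))\log\log\log\log X_0$ over the whole range of $t$.
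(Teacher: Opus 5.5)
Your proof is correct, and it does not follow the paper's route. The paper takes the shortcut you reject. It picks the integer $y$ with $X(y)<X_0<X(y+1)$, asserts $X(y+1)-X(y)\le y^{y^2}X(y)$, concludes $\log X_0=(1+o(1))\log X(y)$, and finishes with $C_\nu(X_0,a)\ge C_\nu(X(y),a)$.

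Your diagnosis shows that step is invalid. We have $\log X(y+1)=(2e+o(1))y^{y+2}\log^2 y$ while $\log X(y)=(2+o(1))y^{y+1}\log^2 y$. So $X(y+1)/X(y)$ is vastly larger than $y^{y^2}$, and for $X_0$ near $X(y+1)$ monotonicity alone gives only an exponent of order $1/(ey)$. The paper's route would have spared you reopening the construction, but it is not actually available. Your re-run of Lemma~\ref{ni} at the fixed integer scale $z$, with $t$ tuned to $X_0$, is what genuinely covers the range between consecutive values $X(z)$.

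The checks you list do go through:
\begin{itemize}
\item $\mathcal Q_i$, $k_1,k_2,k_3$, $P$ and the bound $s(L)<e^z$ are all independent of $t$.
\item The required $t$ is at most $(e+o(1))z^{z+1}$, so $v/t$ is still $z^{z\log z-(2+o(1))z\log\log z}$.
\item $t\ge(1-o(1))z^z$ keeps both $v^{s(L)}\le v^{e^z}$ and $\log P$ inside the $o(1)$.
\item $\log\log X_0=z\log z+O(\log z)$ over the whole range, so $\log z$ and $\log\log z$ convert to the iterated logarithms of $X_0$ exactly as in the integer case.
\end{itemize}

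A few minor repairs are needed:
\begin{itemize}
\item $t$ must be an integer, so take the floor of the value you solve for.
\item The prime bound in the proof of Lemma~\ref{ni} carries $(2A+o(1))$ where $X(z)$ uses $2A$. So when $X_0$ is close to $X(z)$, that value of $t$ can dip slightly below $z^z$. State the range as, say, $[\tfrac12 z^z,z^{z+2}]$; this is harmless.
\item Your formula $\log X_0=2tz(\log z+\cdots)$ should be read as $\log_z X_0$, to match the count $z^{2tz(\cdots)}$.
\item The inequality $(z+1)^{z+2}\le e(z+1)z^{z+1}$ actually goes the other way, since $(1+1/z)^{z+1}>e$. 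Only the $(e+o(1))$ form is used, so nothing breaks.
\end{itemize}
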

\begin{proof}
Let $y$ be the integer such that $X(y)<X_0<X(y+1)$.  Since $X(z)$ is a monotone increasing function with an increasing first derivative when $z$ is sufficiently large,
$$X_0-X(y)\leq X(y+1)-X(y)\leq \frac{dX(y+1)}{dy}\leq y^{y^2}X(y).$$
So
$$\log X_0=\log X(y)(1+o(1)).$$
So we can use our log lookup table above with $X_0$ replacing $X$, finding
$$X_0\leq (\log X_0)^{(1+o(1))\log\log X_0}X(y),$$
which can be rewritten as
$$X_0^{1-\frac{(1+o(1))(\log \log X_0)^2}{\log X_0}}\leq X(y).$$
So
\begin{align*}C_\nu(X_0,a)&\geq C_\nu(X(y),a)\\
&\geq (X(y))^{1-(2+o(1))\frac{\log\log \log \log X(y)}{\log \log\log X(y)}}\\
&\geq X_0^{1-(2+o(1))\frac{\log\log \log \log X_0}{\log \log\log X_0}-\frac{(1+o(1))(\log \log X_0)^2}{\log X_0}}.
\end{align*}
Since
$$\frac{(\log \log X_0)^2}{\log X_0}=o\left(\frac{\log\log \log \log X_0}{\log \log\log X_0}\right),$$
the inequality can be simplified to 
$$C_\nu(X_0,a)\geq X_0^{1-(2+o(1))\frac{\log\log \log \log X_0}{\log \log\log X_0}},$$
thereby proving Theorem \ref{ThmX0}, and hence proving Theorem \ref{MainTheorem}.
\end{proof}

\vskip20pt\noindent {\bf Acknowledgements.} We would like to thank Jonathan Webster for asking a question that prompted the writing of this paper.  We also wish to thank anonymous referees for some very helpful suggestions.  Additionally, we would like to thank Carl Pomerance for some very helpful feedback, and Daniel Larsen for spurring us to prove this result in greater generality.  Finally, we are also grateful for a Wofford College Summer Grant that funded this work.

\bibliographystyle{line}

\end{document}